\newcommand{\QED}{\hspace*{\fill}\rule{2.5mm}{2.5mm}}
\newtheorem{theorem}{Theorem}[section]
\newenvironment{proof}{\noindent{\bf Proof\ }}{\QED\\}
\newcommand{\R}{\mathbb{R}}
\newtheorem{lemma}{Lemma}[section]
\begin{document}
\begin{center}
\vspace{0.5cm} {\large \bf ``Quantiles Equivariance"}\\
\vspace{1cm} Reza Hosseini, University of British Columbia\\
333-6356, Agricultural Road, Vancouver,\\
 BC, Canada, V6T1Z2\\
reza1317@gmail.com
\end{center}

\section{Abstract}
It is widely claimed that the quantile function is equivariant under
increasing transformations. We show by a counterexample that this is
not true (even for strictly increasing transformations). However, we
show that the quantile function is equivariant under left continuous
increasing transformations. We also provide an equivariance relation
for continuous decreasing transformations. In the case that the
transformation is not continuous, we show that while the transformed
quantile at  p can be arbitrarily far from the quantile of the
transformed at p (in terms of absolute difference), the probability
mass between the two is zero. We also show by an example that
weighted definition of the median is not equivariant under even
strictly increasing
continuous transformations.\\

\noindent Keywords: Quantile, quantile function, distribution
function, equivariance, continuous transformation, increasing
transformation

\section{Introduction}

The traditional definition of quantiles for a random variable $X$
with distribution function $F$,
\[lq_X(p)=\inf \{x|F(x) \geq p\},\]
appears in classic works as \cite{parzen-1979}. We call this the
``left quantile function''. In some books (e.g. \cite{rychlik})
the quantile is defined as
\[rq_X(p)=\inf \{x|F(x) > p\}=\sup \{x|\; F(x) \leq p\},\]
this is what we call the ``right quantile function''. Also in
robustness literature people talk about the upper and lower
medians which are a very specific case of these definitions.
Hosseini in \cite{reza-phd} considers both definitions, explore
their relation and show that considering both has several
advantages.

It is widely claimed that (e.g. Koenker in
\cite{quantile-regression-koenker} or Hao and Naiman in
\cite{quantile-regression-hao}) the traditional quantile function
is equivariant under monotonic transformations. We show that this
does not hold even for strictly increasing functions. However, we
prove that the traditional quantile function is equivariant under
non-decreasing left continuous transformations. We also show that
the right quantile function is equivariant under non-decreasing
right continuous transformations. A similar neat result is found
for continuous decreasing transformations using the Quantile
Symmetry Theorem also proved by Hosseini in \cite{reza-phd}. We
state this theorem later when we need it. Hosseini in
\cite{reza-phd}, proved the two following useful lemmas using the
definition of quantiles. We will use some of the items in these
lemmas in our proofs.

\begin{lemma} (Quantile Properties Lemma) Suppose $X$ is a random
variable on the probability space $(\Omega,\Sigma,P)$ with
distribution function $F$:

\begin{enumerate}[a)]
\item $F(lq_F(p))\geq p$. \item $lq_F(p) \leq rq_F(p)$.
\item $p_1<p_2 \Rightarrow rq_F(p_1)\leq lq_F(p_2)$.
\item $rq_F(p)=\sup\{x|F(x)\leq p\}$.
\item $P(lq_F(p)<X<rq_F(p))=0$. i.e. $F$ is flat in the interval $(lq_F(p),rq_F(p))$. \item $P(X<
rq_F(p)) \leq p$.
\item If $lq_F(p)<rq_F(p)$ then $F(lq_F(p))=p$ and hence $P(X\geq rq_F(p))=1-p$. \item $lq_F(1)>-\infty,rq_F(0)<\infty$
and $P(rq_F(0) \leq X \leq lq_F(1))=1$.
\item $lq_F(p)$ and $rq_F(p)$ are non-decreasing functions of $p$.
\item If $P(X=x)>0$ then $lq_F(F(x))=x.$ \item $x<lq_F(p) \Rightarrow F(x)<p$ and
$x>rq_F(p) \Rightarrow F(x)>p.$
\end{enumerate}

\label{quantile-properties}
\end{lemma}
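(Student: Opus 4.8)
The plan is to deduce all eleven items from three elementary facts about the distribution function $F$ of $X$: it is non-decreasing; it is right-continuous; and $P(X<x)=F(x^-):=\lim_{y\uparrow x}F(y)$, so that $P(X=x)=F(x)-F(x^-)$ and $P(X\ge x)=1-F(x^-)$. Write $a=lq_F(p)$, $b=rq_F(p)$, $A_p=\{x:F(x)\ge p\}$, $B_p=\{x:F(x)>p\}$, and use the conventions $\inf\emptyset=+\infty$, $\sup\emptyset=-\infty$. By monotonicity of $F$ the sets $A_p$ and $B_p$ are upward closed (with any point they contain every larger point) and $B_p\subseteq A_p$, so their complements are the downward-closed sets $\{F<p\}\subseteq\{F\le p\}$. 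The only values needing care are $p=0$ and $p=1$, where some of these sets can be empty and the quantiles can equal $\pm\infty$; each such case is then checked directly against the stated conclusion.

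\noindent\emph{The order-theoretic items.} Item (a) is the one place right-continuity enters: choosing $x_n\downarrow a$ with $x_n\in A_p$ gives $F(a)=\lim_n F(x_n)\ge p$. Items (b), (c) and (i) follow from $S\subseteq T\Rightarrow\inf S\ge\inf T$, applied to $B_p\subseteq A_p$, to $A_{p_2}\subseteq B_{p_1}$ when $p_1<p_2$, and to $A_{p_2}\subseteq A_{p_1}$ and $B_{p_2}\subseteq B_{p_1}$ when $p_1\le p_2$. Item (d) holds because $\R$ splits into the downward-closed set $\{F\le p\}$ and the upward-closed set $B_p$, whose supremum and infimum therefore coincide; this is what lets (d), (f) and (k) refer to the single number $b$. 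For (k), a point strictly below $\inf A_p$ is not in $A_p$ (hence $F<p$ there), and a point strictly above $\sup\{F\le p\}$ is not in $\{F\le p\}$ (hence $F>p$ there).

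\noindent\emph{The probabilistic items.} The heart of the lemma is (e) together with the first half of (g). Suppose $a<b$. For $x\in(a,b)$, monotonicity and (a) give $F(x)\ge F(a)\ge p$, while $x<b=\inf B_p$ forces $x\notin B_p$, i.e.\ $F(x)\le p$. The chain $p\le F(a)\le F(x)\le p$ collapses, so $F\equiv p$ on $[a,b)$; in particular $F(a)=p$ and $F(b^-)=p$, whence
\[P(a<X<b)=F(b^-)-F(a)=0\quad\text{and}\quad P(X\ge b)=1-F(b^-)=1-p,\]
which is (e) and the remainder of (g) (the case $a=b$ being trivial). The same estimate, without assuming $a<b$, gives (f): every $x<b$ lies outside $B_p$, so $P(X<b)=F(b^-)\le p$. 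For (h), $lq_F(1)>-\infty$ since $F(-\infty)=0$ keeps a left half-line out of $A_1$, and $rq_F(0)<\infty$ since $F(+\infty)=1$ puts a right half-line into $B_0$; then (a) yields $P(X\le lq_F(1))=1$, (f) with $p=0$ yields $P(X<rq_F(0))=0$, and subtracting gives $P(rq_F(0)\le X\le lq_F(1))=1$. For (j), $P(X=x)>0$ means $F(x^-)<F(x)=:p$, so every $y<x$ has $F(y)\le F(x^-)<p$ and thus $y\notin A_p$, while $x\in A_p$; hence $x=\min A_p=lq_F(p)$.

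\noindent I expect the only non-mechanical point to be the collapsing chain in (e)/(g): recognising that the single squeeze $p\le F(a)\le F(x)\le p$ simultaneously pins down the exact value $F(a)=p$ and the flatness of $F$ on $[a,b)$, from which both the zero-mass statement and the $P(X\ge rq_F(p))=1-p$ identity drop out. Everything else is bookkeeping with monotonicity, right-continuity, and the $\pm\infty$ boundary conventions.
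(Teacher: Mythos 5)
Your proof is correct. Note that the paper itself does not prove this lemma: it is quoted from Hosseini's thesis, and only the Quantile Value Criterion Lemma is proved in the text, so there is no in-paper argument to compare against. Your self-contained derivation from the three facts (monotonicity, right-continuity, $P(X<x)=F(x^-)$) is sound, and the central observation --- the squeeze $p\le F(a)\le F(x)\le p$ on $(a,b)$, which simultaneously gives $F(a)=p$, the flatness in (e), and $P(X\ge b)=1-p$ in (g) --- is exactly the right engine. Two small points deserve one extra line each: the subtraction in (h) tacitly uses $rq_F(0)\le lq_F(1)$, which is your item (c) with $p_1=0<1=p_2$; and the identity $P(a<X<b)=F(b^-)-F(a)$ in (e) is where the convention $P(X<x)=F(x^-)$ is actually consumed, via $\{a<X<b\}=\{X<b\}\setminus\{X\le a\}$. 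The $\pm\infty$ conventions at $p\in\{0,1\}$ are handled consistently, and the partition argument for (d) (downward-closed $\{F\le p\}$ versus upward-closed $B_p$ sharing their supremum/infimum) is a clean way to get the $\inf=\sup$ identity that the paper simply asserts in the definition of $rq$.
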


\begin{lemma}(Quantile Value Criterion Lemma)
\begin{enumerate}[a)]

\item $lq_F(p)$ is the only $a$ satisfying (i) and (ii), where\\
(i) $F(a) \geq p$,\\
(ii) $x<a \Rightarrow F(x)<p.$

\item $rq_F(p)$ is the only $a$ satisfying (i) and (ii), where\\
(i) $x<a \Rightarrow F(x) \leq p$,\\
 (ii) $x>a \Rightarrow F(x)>p$.
\end{enumerate}
\label{lemma-quantile-value-charac}
\end{lemma}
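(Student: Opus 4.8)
The plan is to treat each of a) and b) by splitting the statement into an \emph{existence} half --- the indicated quantile does satisfy (i) and (ii) --- and a \emph{uniqueness} half --- no other point does --- the latter being a short contradiction argument exploiting how the two conditions ``interleave'' around the candidate value. I will take $p\in(0,1)$ so that $lq_F(p)$ and $rq_F(p)$ are finite; the endpoints $p\in\{0,1\}$ are handled separately under the conventions $\inf\emptyset=+\infty$, $\sup\emptyset=-\infty$.

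For a), existence would be read off directly from the Quantile Properties Lemma: condition (i), $F(lq_F(p))\geq p$, is its item (a), and condition (ii), $x<lq_F(p)\Rightarrow F(x)<p$, is the first implication of its item (k). (Alternatively, (ii) is immediate from $lq_F(p)=\inf\{x|F(x)\geq p\}$, while (i) is the one place right-continuity of $F$ enters.) For uniqueness, suppose $a$ and $a'$ both satisfy (i) and (ii) with $a<a'$; applying (ii) for $a'$ to the point $a$ yields $F(a)<p$, contradicting (i) for $a$. Hence $a=a'$, so $lq_F(p)$, being one such point, is the only one.

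For b), I would first check that $a=rq_F(p)$ satisfies (i) and (ii). For (ii): if $x>rq_F(p)=\inf\{z|F(z)>p\}$ then $x$ is not a lower bound of $\{z|F(z)>p\}$, so some $z<x$ has $F(z)>p$, whence $F(x)\geq F(z)>p$ by monotonicity (this is also the second implication of item (k)). For (i): if $x<rq_F(p)=\inf\{z|F(z)>p\}$ then $x\notin\{z|F(z)>p\}$, so $F(x)\leq p$ (equivalently, use item (d), $rq_F(p)=\sup\{z|F(z)\leq p\}$, together with monotonicity). For uniqueness, suppose $a<a'$ both satisfy (i) and (ii) and pick $c$ with $a<c<a'$, say $c=(a+a')/2$; then (ii) for $a$ forces $F(c)>p$ while (i) for $a'$ forces $F(c)\leq p$, a contradiction, so $a=a'$ and $rq_F(p)$ is the unique such point.

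This is essentially routine, so the only real obstacle is minor: the sole step with genuine content is the inequality $F(lq_F(p))\geq p$ in a)(i), which rests on right-continuity of $F$ and is already granted by the Quantile Properties Lemma, and the boundary values $p\in\{0,1\}$ need a separate look, since there a quantile may be $\pm\infty$ and the ``choose a point strictly between $a$ and $a'$'' step in the uniqueness arguments must be read in the extended reals (or those cases simply excluded).
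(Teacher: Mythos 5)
Your proof is correct and follows essentially the same route as the paper: existence is read off from the Quantile Properties Lemma (or the definitions), and uniqueness is a short contradiction with two points $a<a'$. The only difference is that you explicitly spell out the part b) uniqueness argument via an intermediate point $c\in(a,a')$, which the paper leaves as ``similar to above.''
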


\begin{proof}
\begin{enumerate}[a)]

\item Both properties hold for $lq_F(p)$ by the previous lemma. If
both $a<b$ satisfy them, then $F(a) \geq p$ by (i). But since $b$
satisfies the properties and $a<b$, by (ii), $F(a)<p$ which is a
contradiction.

\item Both properties hold for $rq_F(p)$ by the previous lemma. If
both $a<b$ satisfy them, then we can get a contradiction similar
to above.

\end{enumerate}
\end{proof}

It is customary to use weighted procedures to define the quantiles
of a data vector. The most widely used example is the definition
of median when for the sorted data vector $x=(x_1,\cdots,x_n)$,
$n$ is even, in which case the median is defined to be
$\frac{x_{\frac{n}{2}}+x_{\frac{(n+2)}{2}}}{2}$. We start by an
example that shows with this definition the median is not
equivariant even under continuous strictly increasing
transformation (a continuous re-scaling of data).

\begin{example}
A supervisor asked 2 graduate students to summarize the following
data regarding the intensity of the earthquakes in a specific
region:

{\tiny
\begin{table}[H]
  \centering  \footnotesize
  \begin{tabular}{lcccccccc}
\toprule[1pt]
row number & $M_L$ (Richter) & $A$ (shaking amplitude)\\
\midrule[1pt]
 1& 4.21094&  $1.62532 \times 10^4$\\
 2& 4.69852&  $4.99482 \times 10^4$\\
 3& 4.92185&  $8.35314 \times 10^4$\\
 4& 5.12098& $13.21235 \times 10^4$\\
 5& 5.21478& $16.39759 \times 10^4$\\
 6& 5.28943& $19.47287 \times 10^4$\\
 7& 5.32558& $21.16313 \times 10^4$\\
 8& 5.47828& $30.08015 \times 10^4$\\
 9& 5.59103& $38.99689 \times 10^4$\\
10& 5.72736& $53.37772 \times 10^4$\\
\bottomrule[1pt]
\end{tabular}
\caption{Earthquakes intensities}
 \label{table:quantile-def-richter}
\end{table}
} Earthquake intensity is usually measured in $M_L$ scale, which
is related to $A$ by the following formula:

\[M_L=\log_{10}A.\]
In the data file handed to the students (Table
\ref{table:quantile-def-richter}), the data is sorted with respect
to $M_L$ in increasing order from top to bottom. Hence the data is
arranged decreasingly with respect to $A$ from top to bottom.

The supervisor asked two graduate students to compute the center
of the intensity of the earthquakes using this dataset. One of the
students used $A$ and the usual definition of median and so
obtained

\[( 16.39759 \times 10^4+19.47287 \times 10^4)/2=17.93523 \times 10^{4}.\]

The second student used the $M_L$ and the usual definition of
median to find

\[(5.21478+5.28943)/2=5.252105.\]

When the supervisor saw the results he figured that the students
must have used different scales. Hence he tried to make the scales
the same by transforming the second student's result

\[10^{5.252105}= 17.86920 \times 10^{4}.\]
To his surprise the results were not quite the same. He was
bothered to notice that the definition of median is not
equivariant under the change of scale which is continuous strictly
increasing.
\end{example}

\section{Equivariance property of quantile functions}

\begin{example}(Counter example for Koenker--Hao claim)
Suppose $X$ is distributed uniformly on [0,1]. Then
$lq_X(1/2)=1/2.$ Now consider the following strictly increasing
transformation
\[\phi(x)=\begin{cases}x & -\infty<x<1/2\\
x+5 & x \geq 1/2 \end{cases}.\] Let $T=\phi(X)$ then the
distribution of $T$ is given by

\[P(T \leq t)=
\begin{cases}0 &  t\leq 0\\
t & 0< t \leq 1/2\\
1/2 & 1/2<t \leq 5+1/2\\
t-5 & 5+1/2<t \leq 5+1\\
1 & t>5+1\end{cases}.\]

It is clear form above that $lq_T(1/2)=1/2 \neq
\phi(lq_X(1/2))=\phi(1/2)=5+1/2.$

\end{example}

We start by defining

\[\phi^{\leq}(y)=\{x|\phi(x) \leq y\},\;\phi^{\star}(y)=\sup \phi^{\leq}(y),\]
and
\[\phi^{\geq}(y)=\{x|\phi(x) \geq y\},\;\phi_{\star}(y)=\inf \phi^{\geq}(y).\]
Then we have the following lemma.

\begin{lemma}
Suppose $\phi$ is non-decreasing.

\begin{enumerate}[a)]

\item If $\phi$ is left continuous then
\[\phi(\phi^{\star}(y)) \leq y.\]

\item If $\phi$ is right continuous then
\[\phi(\phi_{\star}(y)) \geq y.\]

\end{enumerate}

\begin{proof}

\begin{enumerate}[a)]

\item Suppose $x_n \uparrow \phi^{\star}(y)$ a strictly increasing
sequence. Then since $x_n<\phi^{\star}(y)$, we conclude  $x_n \in
\phi^{\leq}(y) \Rightarrow \phi(x_n) \leq y.$ Hence $\lim_{n
\rightarrow \infty} \phi(x_n) \leq y.$ But by left continuity
$\lim_{n \rightarrow \infty}\phi(x_n)=\phi(\phi^{\star}(y))$.

\item Suppose $x_n \downarrow \phi_{\star}(y)$ a strictly
decreasing sequence. Then since $x_n>\phi_{\star}(y),$ we conclude
$x_n \in \phi^{\geq}(y) \Rightarrow \phi(x_n) \geq y.$ Hence
$\lim_{n \rightarrow \infty} \phi(x_n) \geq y.$ But by right
continuity $\lim_{n \rightarrow
\infty}\phi(x_n)=\phi(\phi_{\star}(y))$.

\end{enumerate}
\end{proof}

\end{lemma}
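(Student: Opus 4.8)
The plan is to prove each part by approximating the relevant extremum from the appropriate one-sided direction using points that actually lie in the defining set, and then invoking the one-sided continuity of $\phi$ to pass the defining inequality through to the limit point.

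For part (a), write $s := \phi^{\star}(y) = \sup \phi^{\leq}(y)$. If $s$ itself belongs to $\phi^{\leq}(y)$ there is nothing to prove, since then $\phi(s) \le y$ by definition of that set. Otherwise, by the definition of supremum I can choose a strictly increasing sequence $x_n \uparrow s$ with every $x_n \in \phi^{\leq}(y)$, i.e. $\phi(x_n) \le y$ for all $n$; here the fact that $\phi$ is non-decreasing is convenient, because it makes $\phi^{\leq}(y)$ a left half-line, so every $x_n < s$ automatically satisfies $\phi(x_n) \le \phi(x') \le y$ for any witness $x' \in \phi^{\leq}(y)$ with $x_n < x'$. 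Letting $n \to \infty$ gives $\lim_n \phi(x_n) \le y$, while left continuity of $\phi$ at $s$ forces $\lim_n \phi(x_n) = \phi(s)$, hence $\phi(\phi^{\star}(y)) = \phi(s) \le y$.

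Part (b) is the mirror image. With $t := \phi_{\star}(y) = \inf \phi^{\geq}(y)$, either $t \in \phi^{\geq}(y)$ and we are done, or I pick a strictly decreasing sequence $x_n \downarrow t$ with $\phi(x_n) \ge y$ for all $n$, so that $\lim_n \phi(x_n) \ge y$; right continuity of $\phi$ at $t$ then yields $\phi(\phi_{\star}(y)) = \phi(t) = \lim_n \phi(x_n) \ge y$.

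The only points that need care, and hence the main obstacle, are the degenerate cases: when $\phi^{\leq}(y)$ (respectively $\phi^{\geq}(y)$) is empty or unbounded the supremum/infimum is $\pm\infty$ and the statement must be read vacuously or with the usual conventions, so one should either restrict attention to the nontrivial case or note it explicitly. Beyond that, one must be slightly careful that the approximating sequence genuinely sits inside the defining set and is strictly monotone, so that exactly the one-sided continuity hypothesis is what the argument consumes; the monotonicity of $\phi$ is what guarantees the defining sets are half-lines and makes this routine.
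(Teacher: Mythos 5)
Your proof is correct and follows essentially the same route as the paper's: approximate $\phi^{\star}(y)$ (resp.\ $\phi_{\star}(y)$) by a monotone sequence inside the defining set and pass the inequality through the one-sided limit. You merely make explicit two details the paper leaves implicit --- why monotonicity of $\phi$ puts every $x_n<\phi^{\star}(y)$ into $\phi^{\leq}(y)$, and the degenerate empty/unbounded cases --- which is a mild improvement rather than a different argument.
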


\begin{theorem} (Quantile Equivariance Theorem)
Suppose $\phi:\R \rightarrow \R$ is non-decreasing.
\begin{enumerate}[a)]
\item If $\phi$ is left continuous then
\[lq_{\phi(X)}(p)=\phi(lq_X(p)).\]

\item If $\phi$ is right continuous then
\[rq_{\phi(X)}(p)=\phi(rq_X(p)).\]

\end{enumerate}
\label{theo-quantile-equiv}
\end{theorem}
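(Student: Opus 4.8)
The plan is to verify the two defining properties of the left quantile given in the Quantile Value Criterion Lemma (Lemma~\ref{lemma-quantile-value-charac}, part a) for the candidate value $a := \phi(lq_X(p))$ applied to the distribution function of $\phi(X)$. That is, writing $G$ for the distribution function of $T=\phi(X)$, I would show (i) $G(\phi(lq_X(p))) \geq p$, and (ii) $t < \phi(lq_X(p)) \Rightarrow G(t) < p$. By the uniqueness clause of that lemma this forces $lq_T(p) = \phi(lq_X(p))$, which is exactly part~a). Part~b) is the mirror image: verify the two properties characterizing $rq$ for the candidate $\phi(rq_X(p))$, using right continuity in place of left continuity and the companion item of the preliminary lemma, $\phi(\phi_\star(y)) \geq y$.

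For **property (i)** in part~a): abbreviate $q = lq_X(p)$. Then $G(\phi(q)) = P(\phi(X) \leq \phi(q)) = P(X \in \phi^{\leq}(\phi(q))) \geq P(X \leq \phi^{\star}(\phi(q)))$ since $\phi^{\leq}(\phi(q))$ contains every $x \le \phi^\star(\phi(q))$ except possibly $\phi^\star(\phi(q))$ itself; more simply, since $\phi$ is non-decreasing, $X \le q$ implies $\phi(X) \le \phi(q)$, so $G(\phi(q)) \ge P(X \le q) = F(q) \ge p$ by item~a) of the Quantile Properties Lemma. So (i) is essentially immediate from monotonicity and needs no continuity.

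**Property (ii)** is where left continuity enters and is the main obstacle. Suppose $t < \phi(q)$; I must show $G(t) = P(\phi(X) \le t) < p$. The natural move is to compare $t$ with $\phi^\star(t)$. Since $\phi$ is left continuous and non-decreasing, the preliminary lemma gives $\phi(\phi^\star(t)) \le t < \phi(q)$, and since $\phi$ is non-decreasing this yields $\phi^\star(t) < q = lq_X(p)$ (if we had $\phi^\star(t) \ge q$ then $\phi(\phi^\star(t)) \ge \phi(q)$, a contradiction). Now $\{x : \phi(x) \le t\} = \phi^{\le}(t) \subseteq (-\infty, \phi^\star(t)]$, so $G(t) = P(\phi(X) \le t) \le P(X \le \phi^\star(t)) = F(\phi^\star(t))$. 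Since $\phi^\star(t) < lq_X(p)$, item~k) of the Quantile Properties Lemma gives $F(\phi^\star(t)) < p$, hence $G(t) < p$, establishing (ii). The delicate point to get right is the inclusion $\phi^{\le}(t) \subseteq (-\infty, \phi^\star(t)]$ — this is exactly the definition of $\phi^\star(t)$ as the supremum of $\phi^{\le}(t)$, so every member of that set is $\le \phi^\star(t)$ — together with the strict inequality $\phi^\star(t) < lq_X(p)$, which relies crucially on the conclusion $\phi(\phi^\star(t)) \le t$ of part~a) of the preliminary lemma, i.e. on left continuity. Part~b) follows by the symmetric argument: verify the $rq$-criterion for $\phi(rq_X(p))$, replacing $\phi^\star$ by $\phi_\star$, left continuity by right continuity, and items~a),~k) by items~d),~k) (the $rq$-side statements) of the Quantile Properties Lemma.
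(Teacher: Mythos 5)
Your proposal is correct and follows essentially the same route as the paper: verify conditions (i) and (ii) of the Quantile Value Criterion Lemma for the candidate $\phi(lq_X(p))$ (resp. $\phi(rq_X(p))$), with (i) coming from monotonicity alone and (ii) from the bound $\phi(\phi^{\star}(y))\leq y$ (resp. $\phi(\phi_{\star}(y))\geq y$) supplied by the left- (resp. right-) continuity lemma. The only remark is that your first justification of (i) via $\phi^{\leq}(\phi(q))\supseteq(-\infty,\phi^{\star}(\phi(q))]$ minus a point is shakier than needed, but your ``more simply'' argument is exactly the paper's and settles it.
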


\begin{proof}

\begin{enumerate}[a)]

\item We use Lemma \ref{lemma-quantile-value-charac} to prove
this. We need to show (i) and (ii) in that lemma for
$\phi(lq_X(p))$. First note that (i) holds since
\[F_{\phi(X)}(\phi(lq_X(p)))=P(\phi(X)\leq \phi(lq_X(p))) \geq P(X \leq lq_X(p)) \geq p.\]
For (ii) let $y<\phi(lq_X(p))$. Then we want to show that
$F_{\phi(X)}(y)<p$.
 It is sufficient to show $\phi^{\star}(y)<lq_X(p).$ Because then

 \[P(\phi(X) \leq y) \leq P(X \leq \phi^{\star}(y))<p.\]
 To prove $\phi^{\star}(y)<lq_X(p)$, note that by the previous lemma
 \[\phi(\phi^{\star}(y)) \leq y <\phi(lq_X(p)).\]

\item We use Lemma \ref{lemma-quantile-value-charac} to prove
this. We need to show (i) and (ii) in that lemma for
$\phi(rq_X(p))$. To show (i) note that if $y<\phi(rq_X(p))$,
\[P(\phi(X) \leq y) \leq P(\phi(X) < \phi(rq_X(p))) \leq P(X< rq_X(p)) \leq p.\]
To show (ii), suppose $y>\phi(rq_X(p))$. We only need to show
$\phi_{\star}(y)>rq_X(p)$ because then
\[P(\phi(X) \leq y) \geq P(X < \phi_{\star}(y)) >p.\]
But by previous lemma $\phi(\phi_{\star}(y)) \geq y >
\phi(rq_X(p))$. Hence $\phi_{\star}(y)>rq_X(p)$.
\end{enumerate}
\end{proof}

In order to find an equivariance under decreasing transformations
we need the Quantile Symmetry Theorem proved by Hosseini in
\cite{reza-phd}.

\begin{theorem}(Quantile Symmetry Theorem) Suppose $X$ is a random variable and $p \in [0,1]$.
Then

\[lq_X(p)=-rq_{-X}(1-p).\]

\label{theo-quantile-symmetry}
\end{theorem}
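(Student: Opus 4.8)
The plan is to identify $a := -rq_{-X}(1-p)$ with $lq_X(p)$ by verifying the two characterizing conditions in part (a) of Lemma~\ref{lemma-quantile-value-charac}, namely (i) $F_X(a) \geq p$ and (ii) $x < a \Rightarrow F_X(x) < p$; since that lemma asserts $lq_X(p)$ is the \emph{only} point with these two properties, establishing them for $a$ finishes the proof. The bridge between the two sides of the claimed identity is the elementary translation between events for $X$ and events for $-X$: writing $b := rq_{-X}(1-p) = -a$, one has $F_{-X}(t) = P(-X \leq t) = P(X \geq -t) = 1 - P(X < -t)$ and, similarly, $P(-X < t) = P(X > -t) = 1 - P(X \leq -t)$. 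Everything then reduces to feeding these identities into facts already available about $rq_{-X}(1-p)$ from Lemma~\ref{quantile-properties}.

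For condition (i): evaluate $F_X(a) = P(X \leq a) = P(-X \geq b) = 1 - P(-X < b)$, and apply item (f) of Lemma~\ref{quantile-properties} to the variable $-X$ at level $1-p$, which gives $P(-X < b) \leq 1-p$; hence $F_X(a) \geq 1 - (1-p) = p$. For condition (ii): take $x < a$ and interpolate a point $x'$ with $x < x' < a$, so that $-x' > b = rq_{-X}(1-p)$; item (k) of Lemma~\ref{quantile-properties} applied to $-X$ at level $1-p$ then yields $F_{-X}(-x') > 1-p$, i.e. $P(X \geq x') > 1-p$, i.e. $P(X < x') < p$; and since $F_X(x) = P(X \leq x) \leq P(X < x') < p$, condition (ii) holds. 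The degenerate values $p \in \{0,1\}$ should be checked separately, but there both sides are immediately $\mp\infty$ straight from the definitions.

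The step that needs the most care — and the only genuine subtlety — is condition (ii): item (k) delivers only a statement about $P(X < x')$ (a left limit of $F_X$), whereas we must bound $F_X(x) = P(X \leq x)$ itself, so the interpolation $x < x' < a$ is essential and cannot be skipped. This is precisely the strict-versus-nonstrict inequality mismatch that the paper's counterexample exploits, so it is worth spelling out explicitly rather than glossing over. An alternative route would be a direct set-theoretic computation: rewrite $rq_{-X}(1-p)$ via item (d) of Lemma~\ref{quantile-properties} as $\sup\{t \mid F_{-X}(t) \leq 1-p\}$, substitute $t = -s$ and use $\sup(-S) = -\inf S$ to get $-rq_{-X}(1-p) = \inf\{s \mid P(X < s) \geq p\}$, and then show this infimum coincides with $\inf\{s \mid P(X \leq s) \geq p\} = lq_X(p)$ by a monotonicity squeeze; that argument runs into the same strict/nonstrict point at the squeeze step, so either way the work is concentrated there.
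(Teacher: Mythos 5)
Your proof is correct, and it is self-contained in a way the paper is not: the paper merely cites \cite{reza-phd} for the Quantile Symmetry Theorem and gives no argument here, so there is no in-text proof to compare against. Your route --- verifying conditions (i) and (ii) of Lemma \ref{lemma-quantile-value-charac}(a) for $a=-rq_{-X}(1-p)$ using items (f) and (k) of Lemma \ref{quantile-properties} applied to $-X$ at level $1-p$ --- goes through exactly as written, and you are right that the interpolation $x<x'<a$ in condition (ii) is the one step that cannot be skipped, since item (k) controls $F_{-X}(-x')=P(X\geq x')$ rather than $P(X\leq x)$ directly. The only blemish is the parenthetical on the degenerate cases: at $p=1$ both sides are \emph{not} ``immediately $\mp\infty$'' --- by Lemma \ref{quantile-properties}(h) both $lq_X(1)$ and $rq_{-X}(0)$ are typically finite (e.g.\ $lq_X(1)=1$ for $X$ uniform on $[0,1]$). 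Fortunately your main argument via (i) and (ii) already covers $p=1$ without modification, so only $p=0$ needs the separate (and genuinely trivial) check that both sides equal $-\infty$.
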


\begin{theorem} (Decreasing transformation equivariance)\\
a) Suppose $\phi$ is non-increasing and  right continuous on $\R$.
Then
\[lq_{\phi(X)}(p)=\phi(rq_X(1-p)).\]
b) Suppose $\phi$ is non-increasing and  left continuous on $\R$.
Then
\[rq_{\phi(X)}(p)=\phi(lq_X(1-p)).\]
 \label{theo-decreas-equiv}
\end{theorem}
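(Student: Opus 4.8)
The plan is to derive the decreasing-transformation equivariance by reducing it to the increasing case already established in Theorem \ref{theo-quantile-equiv}, using the reflection $X \mapsto -X$ as the bridge and the Quantile Symmetry Theorem to translate between $lq$ and $rq$ at complementary levels $p$ and $1-p$. The key observation is that if $\phi$ is non-increasing, then $\psi(x) := \phi(-x)$ is non-decreasing, and moreover $\phi(x) = \psi(-x)$, so $\phi(X) = \psi(-X)$ where $\psi$ is a non-decreasing transformation to which part (a) or (b) of Theorem \ref{theo-quantile-equiv} applies. One just has to track continuity: if $\phi$ is right continuous then $\psi(x) = \phi(-x)$ is left continuous (precomposition with $x \mapsto -x$ swaps left and right limits), and if $\phi$ is left continuous then $\psi$ is right continuous.

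For part (a): assume $\phi$ is non-increasing and right continuous. Set $\psi(x) = \phi(-x)$, which is non-decreasing and left continuous. Then
\[
lq_{\phi(X)}(p) = lq_{\psi(-X)}(p) = \psi\bigl(lq_{-X}(p)\bigr)
\]
by Theorem \ref{theo-quantile-equiv}(a) applied to $\psi$ and the random variable $-X$. Now apply the Quantile Symmetry Theorem (Theorem \ref{theo-quantile-symmetry}) in the form $lq_{-X}(p) = -rq_{X}(1-p)$, which gives
\[
lq_{\phi(X)}(p) = \psi\bigl(-rq_X(1-p)\bigr) = \phi\bigl(rq_X(1-p)\bigr),
\]
using $\psi(-y) = \phi(y)$. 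This is exactly the claimed identity.

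For part (b): assume $\phi$ is non-increasing and left continuous. The same substitution $\psi(x) = \phi(-x)$ is now non-decreasing and right continuous, so Theorem \ref{theo-quantile-equiv}(b) gives $rq_{\phi(X)}(p) = rq_{\psi(-X)}(p) = \psi\bigl(rq_{-X}(p)\bigr)$. Applying the Quantile Symmetry Theorem to $-X$ (equivalently, the mirror identity $rq_{-X}(p) = -lq_X(1-p)$, which follows from Theorem \ref{theo-quantile-symmetry} by replacing $X$ with $-X$ and $p$ with $1-p$) yields $rq_{\phi(X)}(p) = \psi\bigl(-lq_X(1-p)\bigr) = \phi\bigl(lq_X(1-p)\bigr)$.

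The only genuinely delicate point is the bookkeeping of one-sided continuity under precomposition with the reflection: one should verify that $x \mapsto \phi(-x)$ inherits left continuity from right continuity of $\phi$ (and vice versa), since this is what licenses the application of the correct part of Theorem \ref{theo-quantile-equiv}. Everything else is a mechanical substitution, and the symmetry theorem does all the remaining work of matching $lq$ with $rq$ and $p$ with $1-p$. I would also note explicitly that the mirror form of the symmetry theorem, $rq_X(p) = -lq_{-X}(1-p)$, is obtained simply by applying Theorem \ref{theo-quantile-symmetry} to the random variable $-X$, so no separate proof is needed.
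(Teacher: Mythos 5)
Your proof is correct and rests on exactly the same two ingredients as the paper's own argument --- the Quantile Symmetry Theorem combined with Theorem \ref{theo-quantile-equiv} --- the only difference being that you place the reflection on the input side (working with $\psi(x)=\phi(-x)$ applied to $-X$) whereas the paper places it on the output side (working with $-\phi$ and applying the symmetry theorem to the variable $\phi(X)$), which merely swaps which one-sided continuity you need and hence which part of Theorem \ref{theo-quantile-equiv} gets invoked. Your bookkeeping of how precomposition with $x\mapsto -x$ exchanges left and right continuity is the right delicate point to flag, and the mirror identity $rq_{-X}(p)=-lq_{X}(1-p)$ that you use does follow from the symmetry theorem --- in fact simply by substituting $1-p$ for $p$ (no replacement of $X$ by $-X$ is needed for that particular form).
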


\begin{proof}
a) By the Quantile Symmetry Theorem, we have
\[lq_{\phi(X)}(p)=-rq_{-\phi(X)}(1-p).\]
But $-\phi$ is  non-decreasing right continuous, hence the above
is equal to
\[-(-\phi(rq_X(1-p)))=\phi(rq_X(1-p)).\]
b) By the Quantile symmetry Theorem
\[rq_{\phi(X)}(p)=-lq_{-\phi(X)(1-p)}=-(-\phi(lq_X(1-p)))=\phi(lq_X(p)),\]
since $-\phi$ is non-decreasing and left continuous.
\end{proof}

\section{The non-continuous case}

We showed by an example that the equivariance property does not
hold for increasing transformations that are not continuous.
However we show here that the transformed quantile is not that
much off at the end in a specific sense. We start by a lemma.
\begin{lemma}
Let $X$ be a random variable. Then
\[[lq_X(p),rq_X(p)]=\{y|\;F_X^o(y)\leq p,\; F_X(y) \geq p\},\]
where $F^o_X(x)=P(X<x),\; F_X(x)=P(X \leq x).$
\end{lemma}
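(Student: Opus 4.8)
The plan is to prove the two set inclusions separately, leaning on the characterizations of $lq_F(p)$ and $rq_F(p)$ already collected in Lemma \ref{quantile-properties}. Write $A=[lq_X(p),rq_X(p)]$ and $B=\{y\mid F_X^o(y)\leq p,\ F_X(y)\geq p\}$. Note first that $F_X^o$ is left continuous and non-decreasing, $F_X$ is right continuous and non-decreasing, and $F_X^o(y)\leq F_X(y)$ for every $y$; these monotonicity facts will be used repeatedly.

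First I would show $A\subseteq B$. Take $y\in A$, so $lq_X(p)\leq y\leq rq_X(p)$. For the inequality $F_X(y)\geq p$: since $F_X$ is non-decreasing and $y\geq lq_X(p)$, we get $F_X(y)\geq F_X(lq_X(p))\geq p$ by item a) of Lemma \ref{quantile-properties}. For the inequality $F_X^o(y)\leq p$: since $y\leq rq_X(p)$ we have $F_X^o(y)=P(X<y)\leq P(X<rq_X(p))\leq p$ by item f) of Lemma \ref{quantile-properties}. Hence $y\in B$.

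Next I would show $B\subseteq A$, which I expect to be the more delicate direction. Take $y\in B$. To get $y\geq lq_X(p)$, I would argue by contraposition: if $y<lq_X(p)$ then by item k) of Lemma \ref{quantile-properties}, $F_X(y)<p$, contradicting $F_X(y)\geq p$; so $y\geq lq_X(p)$. To get $y\leq rq_X(p)$, again by contraposition: if $y>rq_X(p)$, pick any $z$ with $rq_X(p)<z<y$; then $F_X^o(y)=P(X<y)\geq P(X\leq z)=F_X(z)$, and since $z>rq_X(p)$, item k) gives $F_X(z)>p$, so $F_X^o(y)>p$, contradicting $F_X^o(y)\leq p$. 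Hence $rq_X(p)\geq y$, so $y\in A$.

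The main obstacle is the handling of $F_X^o$ versus $F_X$ in the second inclusion — one must insert an intermediate point $z$ to convert the strict information about $F_X^o(y)$ into the value of $F_X$ at a point strictly above $rq_X(p)$, where item k) of Lemma \ref{quantile-properties} applies. A routine check is also needed at the boundary cases $p=0$ and $p=1$ (using item h) of Lemma \ref{quantile-properties} to ensure the endpoints are finite and the displayed set identity still makes sense), but this does not affect the structure of the argument. \QED
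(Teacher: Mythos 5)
Your proof is correct and follows essentially the same route as the paper's: both directions are handled by the same inclusion argument, with the same intermediate-point trick ($rq_X(p)<z<y$) to pass from $F_X^o(y)$ to a value of $F_X$ strictly above $rq_X(p)$. The only difference is cosmetic — you cite items f) and k) of Lemma \ref{quantile-properties} where the paper invokes the Quantile Value Criterion Lemma, and these are interchangeable here.
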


\begin{proof}
By the Quantile Property Lemma (a)
\[F(a)\geq F(lq_X(p)) \geq p,\]
for all $a \geq lq_X(p)$. Now note that by  the Quantile Value
Criterion Lemma, Part (b), we have
\[F^o_X(a) \leq F^o_X(p)=\lim_{x \rightarrow rq_X(p)^+} F(x) \leq p,\]
for all $a \leq rq_X(p)$, which shows
\[[lq_X(p),rq_X(p)] \subset \{y|\;F_X^o(y)\leq p,\; F_X(y) \geq p\}.\]
To prove the converse, suppose $y<lq_X(p)$ then $F(y)<p$ by
Quantile Value Criterion Lemma, Part (a) and hence $y \notin
\{y|\;F_X^o(y)\leq p, F_X(y) \geq p\}.$ Similarly for $y>rq_X(p)$
take $y>z>rq_X(p)$ by the Part (b) of the lemma
\[F_X^o(y)\leq F_X(y)>p.\]
Hence  $y \notin \{y|\;F_X^o(y)\leq p, F_X(y) \geq p\}.$
\end{proof}

\begin{lemma} (Equivariance under non-decreasing transformations)
Suppose $X$ is a random variable with distribution function  $F$
and $\phi:\R \rightarrow \R$ a non-decreasing transformation on
$\R$. Also let $Y=\phi(X)$. Then\\
a) $\phi(lq_X(p)) \in [lq_Y(p),rq_Y(p)]$\\
b) $\phi(rq_X(p)) \in [lq_Y(p),rq_Y(p)].$

\end{lemma}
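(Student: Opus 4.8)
The plan is to use the characterization of the closed interval $[lq_Y(p),rq_Y(p)]$ supplied by the previous lemma, namely $[lq_Y(p),rq_Y(p)]=\{y\mid F_Y^o(y)\leq p,\ F_Y(y)\geq p\}$, and simply verify the two inequalities $F_Y^o(\phi(lq_X(p)))\leq p$ and $F_Y(\phi(lq_X(p)))\geq p$ for part a), and similarly with $rq_X(p)$ in place of $lq_X(p)$ for part b). So the whole proof reduces to estimating the two quantities $P(Y< \phi(c))$ and $P(Y\leq \phi(c))$ where $c$ is $lq_X(p)$ or $rq_X(p)$, using only monotonicity of $\phi$.

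For part a) with $c=lq_X(p)$: the upper bound $F_Y(\phi(c))=P(\phi(X)\leq\phi(c))\geq P(X\leq c)=F_X(c)\geq p$ follows from monotonicity of $\phi$ together with Quantile Properties Lemma (a), giving the second required inequality. For the first inequality I would observe that $\{\phi(X)<\phi(c)\}\subseteq\{X<c\}$: indeed if $\phi(X)<\phi(c)$ then $X\geq c$ is impossible since that would force $\phi(X)\geq\phi(c)$ by monotonicity, so $X<c$. Hence $F_Y^o(\phi(c))=P(\phi(X)<\phi(c))\leq P(X<c)=F_X^o(c)\leq p$, where the last step uses Quantile Value Criterion Lemma (a) (which gives $x<c\Rightarrow F_X(x)<p$, hence $F_X^o(c)=\lim_{x\uparrow c}F_X(x)\leq p$; equivalently $P(X<lq_X(p))\leq p$). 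That settles a).

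For part b) with $c=rq_X(p)$: the first inequality is $F_Y^o(\phi(c))=P(\phi(X)<\phi(c))\leq P(X<c)=P(X<rq_X(p))\leq p$, using the same containment $\{\phi(X)<\phi(c)\}\subseteq\{X<c\}$ and Quantile Properties Lemma (f). For the second inequality, $F_Y(\phi(c))=P(\phi(X)\leq\phi(c))\geq P(X\leq c)=P(X\leq rq_X(c))\geq P(X<rq_X(p))$ — but this only gives $\geq$ something $\leq p$, which is not enough. Instead I would use $\{X\leq c\}\supseteq\{X<c'\}$ is the wrong direction; better: for any $c'>c=rq_X(p)$ we have $\phi(c')\geq\phi(c)$, hence $F_Y(\phi(c))=P(\phi(X)\leq\phi(c))\geq P(X\leq c)$ — and since $c=rq_X(p)$, $P(X\leq c)\geq p$ would follow if $F_X(rq_X(p))\geq p$. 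This last fact holds because $F_X$ is right continuous and $F_X(x)>p$ for all $x>rq_X(p)$ (Quantile Properties Lemma (k)), so letting $x\downarrow rq_X(p)$ gives $F_X(rq_X(p))\geq p$; alternatively it is immediate from Quantile Properties Lemma (a) applied with $lq_X\leq rq_X$, since $F_X(rq_X(p))\geq F_X(lq_X(p))\geq p$. Thus $F_Y(\phi(c))\geq p$, completing b).

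The only genuinely delicate point is the containment of events $\{\phi(X)<\phi(c)\}\subseteq\{X<c\}$, which is exactly where non-decreasingness (rather than strict monotonicity) of $\phi$ is used and which must be stated carefully: it is the contrapositive of $X\geq c\Rightarrow\phi(X)\geq\phi(c)$. Everything else is bookkeeping with the items of the Quantile Properties Lemma. I would write the proof in two short labeled parts a) and b), in each case displaying the two inequalities and then invoking the preceding lemma to conclude membership in $[lq_Y(p),rq_Y(p)]$; no limiting arguments beyond the one-line right-continuity remark for $F_X(rq_X(p))\geq p$ are needed, and that one can be avoided entirely by citing Quantile Properties Lemma (a) and (b) together.
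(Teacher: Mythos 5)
Your proof is correct and follows essentially the same route as the paper: verify $F_Y^o(\phi(c))\leq p$ and $F_Y(\phi(c))\geq p$ via the monotonicity containments $\{\phi(X)<\phi(c)\}\subseteq\{X<c\}$ and $\{X\leq c\}\subseteq\{\phi(X)\leq\phi(c)\}$, then invoke the preceding lemma's characterization of $[lq_Y(p),rq_Y(p)]$. The only difference is that you work out part b) explicitly (correctly using $F_X(rq_X(p))\geq F_X(lq_X(p))\geq p$ and $P(X<rq_X(p))\leq p$), whereas the paper dismisses it as ``similar.''
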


\begin{proof}
Note that
\[F_Y^o(\phi(lq_X(p)))=P(\phi(X)<\phi(lq_X(p)))\leq P(X < lq_X(p)) \leq p,\]
and
\[F_Y(\phi(lq_X(p)))=P(\phi(X) \leq \phi(lq_X(p)))\geq P(X \leq lq_X(p)) \geq p.\]
Hence proving a) by the previous lemma and b) is similar.
\end{proof}

\noindent {\bf Remark.} If we consider the ``probability loss
function'' defined as
\[\delta_Y(a,b)=P(a<Y<b)+P(b<Y<a),\]
then the above lemma states that
\[\delta_Y(\phi(lq_X(p)),lq_Y(p))=0,\] and
\[\delta_Y(\phi(rq_X(p)),rq_Y(p))=0.\] Hosseini in \cite{reza-phd}
studied this loss function and used it in approximating quantiles
in large datasets.

\bibliographystyle{plain}
\bibliography{mybibreza}

\end{document}